\newtheorem{thm}{Theorem}[section]
\newtheorem{lemma}[thm]{Lemma}
\newtheorem{corr}[thm]{Corollary}
\newtheorem{fact}[thm]{Fact}
\newtheorem{defn}[thm]{Definition}
\title{Overlap Cycles for Permutations:  \\ Necessary and Sufficient Conditions}
\author{Victoria Horan\thanks{\texttt{victoria.horan.1@us.af.mil}} \\ Air Force Research Laboratory \\ Rome, NY  13441 USA}
\begin{document}

\maketitle
%\section{}
%\subsection{}

\begin{abstract}
	Universal cycles are generalizations of de Bruijn cycles and Gray codes that were introduced originally by Chung, Diaconis, and Graham in 1992. They have been developed by many authors since, for various combinatorial objects such as strings, subsets, permutations, partitions, vector spaces, and designs.  One generalization of universal cycles, which require almost complete overlap of consecutive words, is $s$-overlap cycles, which relax such a constraint.  In this paper we study permutations and some closely related class of strings, namely juggling sequences and functions.  We prove the existence of $s$-overlap cycles for these objects, as they do not always lend themselves to the universal cycle structure.
\end{abstract}

\section{Introduction}

\let\thefootnote\relax\footnote{Approved for public release; distribution unlimited:  88ABW-2013-4003, 9 Sep 13}

Listing structures for combinatorial objects are quickly becoming useful in more and more interesting applications.  Gray codes, first defined in 1947 by Frank Gray \cite{Gray}, are used in many different surprising places from position encoders \cite{Rotary} to genetic algorithms \cite{GA}.  More recently universal cycles are being used in areas such as rank modulation for multilevel flash memories \cite{Flash}, however overlap cycles are still being explored and have the potential to be useful in many applications.

An \textbf{$s$-overlap cycle}, or \textbf{$s$-ocycle}, is an ordering of a set of objects $\mathcal{C}$, each represented as a string of length $n$.  The ordering requires that object $b=b_0b_1 \ldots b_{n-1}$ follow object $a = a_0a_1 \ldots a_{n-1}$ only if $a_{n-s}a_{n-s+1} \ldots a_{n-1} = b_0b_1 \ldots b_{s-1}$.  Ocycles were introduced by Godbole, Knisley, and Norwood in 2010 \cite{Godbole}.  \textbf{Universal cycles}, or \textbf{ucycles} are $(n-1)$-ocycles and were originally introduced in 1992 by Chung, Diaconis, and Graham \cite{UC}.

To find $s$-ocycles and ucycles on a set of strings, most proofs employ the same method.  For a given string $X=x_1x_2 \ldots x_{n}$, let $X^{s-}=x_1x_2 \ldots x_s$ denote the $s$-prefix of $X$ and $X^{s+}=x_{n-s}x_{n-s+1} \ldots x_n$ denote the $s$-suffix of $X$.  The first step in the proof is to construct the \textbf{transition digraph} for the set of strings as follows.  Vertices represent $s$-prefixes and $s$-suffixes of strings (the overlaps), while each edge represents a string, traveling from its $s$-prefix to its $s$-suffix.  Note that the transition digraph is a directed multigraph in which an Euler tour (a closed walk in which every edge is traversed exactly once) corresponds bijectively to an $s$-ocycle.  To prove the existence of an Euler tour, we use the following well-known theorem.

\begin{thm}\label{euler}
	\emph{(\cite{West}, p. 60)}  A directed graph $G$ is eulerian if and only if it is both balanced and weakly connected.
\end{thm}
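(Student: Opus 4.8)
The plan is to prove the two implications separately. First I would establish the easy direction: if $G$ is eulerian, then it is balanced and weakly connected. Given an Euler tour $W$, connectivity of the underlying undirected graph (restricted to vertices incident to some edge) is immediate, since $W$ traverses every edge and thus links every pair of non-isolated vertices by a walk. For balance, I would observe that each occurrence of a vertex $v$ in the interior of the closed walk $W$ is entered by exactly one edge and left by exactly one edge; since $W$ is closed and uses each edge exactly once, the total number of edges entering $v$ equals the total number leaving $v$, so $\mathrm{indeg}(v) = \mathrm{outdeg}(v)$.

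The substance is in the reverse direction, which is where I expect the main obstacle to lie. Assume $G$ is balanced and weakly connected and has at least one edge. The strategy is a maximal-trail and splicing argument. The first key step is to show that any trail which cannot be extended and starts at a vertex $v$ must in fact be closed at $v$: whenever the trail arrives at a vertex $u \neq v$ along some edge, balance guarantees an as-yet-unused outgoing edge at $u$, so the trail is forced to halt only upon returning to $v$ and exhausting the outgoing edges there. Hence $G$ contains a nonempty closed trail.

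The second key step is the splicing argument itself. I would choose a closed trail $T$ of maximum length and argue by contradiction that it must use every edge. If some edge were unused, then by weak connectivity there is an unused edge incident to a vertex $w$ lying on $T$ (otherwise the used and unused edges would disconnect the graph). Now delete from $G$ all edges of $T$; the resulting subgraph $G'$ is again balanced, because both $G$ and the closed trail $T$ contribute equally to the in- and out-degree of every vertex. Applying the first step to $G'$ starting at $w$ yields a nonempty closed trail $T'$ in $G'$. Splicing $T'$ into $T$ at $w$ produces a strictly longer closed trail in $G$, contradicting the maximality of $T$. Therefore $T$ traverses every edge and is an Euler tour.

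The main obstacle, and the step I would be most careful with, is the splicing argument and in particular the use of weak connectivity to locate an unused edge touching the current trail; the balance condition does the local work of guaranteeing closed trails, but it is connectivity that prevents the edge set from fragmenting into several disjoint closed trails that cannot be merged. I would also take care to handle isolated vertices correctly by interpreting \emph{weakly connected} as connectivity of the subgraph induced by the vertices of positive degree.
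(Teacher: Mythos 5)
The paper does not prove this theorem at all --- it is quoted verbatim from West's textbook (p.~60) and used as a black box --- so there is no in-paper argument to compare against. Your proof is the standard one (the same maximal-trail-plus-splicing argument found in West) and is correct; the only point worth making explicit is that the unused edge at $w$ guaranteed by weak connectivity might be an in-edge, but since the subgraph $G'$ of unused edges is balanced, $w$ then also has an unused out-edge, so your first step does apply at $w$.
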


In this paper, we consider using the ocycle listing structure for permutations, as well as functions and juggling sequences.  We represent permutations of an $n$-set $\{0,1, \ldots , n-1\}$ as a string $\Pi = \pi_0 \pi_1 \ldots \pi_{n-1}$, where the functional representation is used, i.e. $\pi(0)=\pi_0$.  Closely related to permutations, juggling sequences have been an active research area since the 1980's \cite{BEGW}.  These sequences are used to determine what patterns a fixed set of balls can be juggled in, where only one ball may be thrown at a time.

\begin{defn}
    \emph{\cite{ChungGraham}}  A \textbf{juggling sequence} is a string $T=t_0t_1 \ldots t_{n-1}$ where each $t_i \geq 0$ such that $$\left| \{ i+t_i \pmod n \mid 0 \leq i \leq n-1\} \right| = n.$$
\end{defn}

This sequence illustrates that at time $i$, we should throw a ball high enough that it is in the air for $t_i$ beats, or to \textbf{height} $t_i$.  The number of \textbf{balls} used in a given juggling sequence $T=t_0t_1 \ldots t_{n-1}$ is given by $$b = \frac{1}{n} \sum_{i=0}^{n-1}t_i.$$  The \textbf{period} of a juggling sequence $t_0t_1 \ldots t_{n-1}$ is $n$, the length of the string.  

An alternative definition considers the corresponding \textbf{permutation sequence} for a juggling sequence.  Given a string $T=t_0t_1 \ldots t_{n-1}$, the permutation sequence is the string $\Pi_T = \pi_0 \pi_1 \ldots \pi_{n-1}$ where $\pi_i = t_i + i \pmod n$.  Then $\Pi_T$ is a permutation of the $n$-set $\{0, 1, \ldots , n-1\}$ if and only if $T$ is a valid juggling sequence.  From this definition it is clear that there is a very close relationship between juggling sequences and permutations, which is further illustrated by their similar results explored in this paper.  The permutation sequence clarifies the point that a juggler cannot catch or throw two objects simultaneously.

For example, when $n=3$ and $b=2$ the pattern $015$ is valid.  We can see this by drawing the \textbf{juggling diagram}, shown in Figure \ref{JD015}.  However, the sequence $105$ is not valid, as shown in Figure \ref{JD105}.  In this second example, note that on the second beat we are required two catch two balls simultaneously - an operation that is not allowed.

\begin{figure}
	\begin{center}
	\setlength{\unitlength}{10mm}
	\begin{picture}(8,5)
		\put(0,1){\circle{.2}}
        \put(0,0){0}
        \put(1,1){\circle{.2}}
        \put(1,0){1}
        \put(2,1){\circle{.2}}
        \put(2,0){5}
        \put(3,1){\circle{.2}}
        \put(3,0){0}
        \put(4,1){\circle{.2}}
        \put(4,0){1}
        \put(5,1){\circle{.2}}
        \put(5,0){5}
        \put(6,1){\circle{.2}}
        \put(6,0){0}
        \put(7,1){\circle{.2}}
        \put(7,0){1}
        \put(8,1){\circle{.2}}
        \put(8,0){5}
		\qbezier(1,1.1)(1.5,2)(2,1.1)
		\qbezier(2,1.1)(4.5,5.5)(7,1.1)
		\qbezier(0,3)(0.5,3)(1,1.1)
        \qbezier(0,2)(1.5,5)(4,1.1)
		\qbezier(4,1.1)(4.5,2)(5,1.1)
		\qbezier(7,1.1)(7.5,2)(8,1.1)
		\qbezier(5,1.1)(7.5,3.6)(8,3.2)
	\end{picture}
	\end{center}
	\caption{Juggling Diagram for Sequence $015$}\label{JD015}
\end{figure}
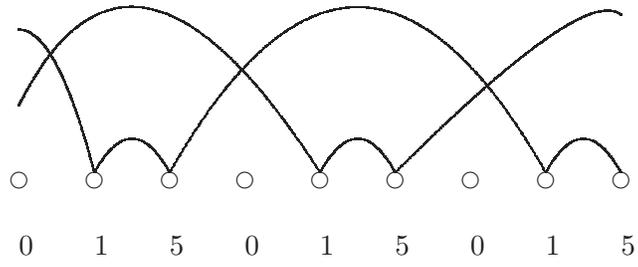

\begin{figure}
	\begin{center}
	\setlength{\unitlength}{10mm}
	\begin{picture}(8,5)
		\put(0,1){\circle{.2}}
        \put(0,0){1}
        \put(1,1){\circle{.2}}
        \put(1,0){0}
        \put(2,1){\circle{.2}}
        \put(2,0){5}
        \put(3,1){\circle{.2}}
        \put(3,0){1}
        \put(4,1){\circle{.2}}
        \put(4,0){0}
        \put(5,1){\circle{.2}}
        \put(5,0){5}
        \put(6,1){\circle{.2}}
        \put(6,0){1}
        \put(7,1){\circle{.2}}
        \put(7,0){0}
        \put(8,1){\circle{.2}}
        \put(8,0){5}
		\qbezier(0,1.1)(0.5,2)(1,1.1)
		\qbezier(2,1.1)(4.5,5.5)(7,1.1)
		\qbezier(0,3)(0.5,3)(1,1.1)
        \qbezier(0,2)(1.5,5)(4,1.1)
		\qbezier(3,1.1)(3.5,2)(4,1.1)
		\qbezier(6,1.1)(6.5,2)(7,1.1)
		\qbezier(5,1.1)(7.5,3.6)(8,3.2)
	\end{picture}
	\end{center}
	\caption{Juggling Diagram for Sequence $105$}\label{JD105}
\end{figure}
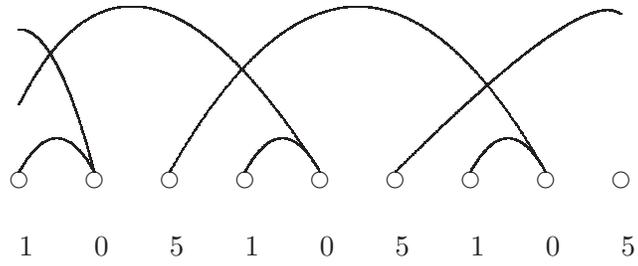

In \cite{ChungGraham}, Chung and Graham show that it is not always possible to find a single universal cycle that contains all juggling sequences of period $n$ and at most $b$ balls.  However they do prove that we can use several disjoint universal cycles to cover each sequence exactly once.  This result is closely related to the long studied problem of universal cycles for permutations.  Since ucycles for permutations are not possible using the standard representation, a modification of the representation can often provide an effective solution \cite{Isaak, PermsUC}.

An alternative that has only recently been explored is to modify the listing structure rather than the object representation.  By utilizing $s$-ocycles rather than ucycles, partial results for permutations have been obtained in \cite{kPerms}, however we obtain a more complete solution in this paper.  In Section \ref{Permutations} we establish a complete result on the existence of $s$-ocycles for permutations, and in Section \ref{Related} we explore similar results for functions and juggling sequences.

\section{Permutations}\label{Permutations}

We begin with a general lemma that will be used for both permutations and juggling sequences.  For a string $X = x_0x_1 \ldots x_{n-1}$, define the \textbf{rotation function} as follows.  A rotation by $s$ is given by: $$\rho^s(X) = x_sx_{s+1} \ldots x_{n-1}x_0x_1 \ldots x_{s-1}.$$  The following lemma shows that for a string of length $n$, rotations by $s$ partition the string into \textbf{blocks} of length gcd$(n,s)$, and we can always perform repeated $s$-rotations to start $X$ with any block desired.

\begin{lemma}\label{rotations}
    Let $n,s \in \mathbb{Z}^+$ with $1 \leq s \leq n-1$ and gcd$(n,s)=d$.  Consider a string $X=x_0x_1 \ldots x_{n-1}$, also written as $X = Y_0Y_1 \ldots Y_{m-1}$ where $n=md$ and $Y_i = x_{id}x_{id+1} \ldots x_{id+d-1}$.  Then for any $i \in \{0, 1, \ldots, m-1\}$ there is some $j \in \{0,1, \ldots , m-1\}$ such that: $$Y_iY_{i+1} \ldots Y_{m-1}Y_0Y_1 \ldots Y_{i-1} = \rho^{js}(X).$$
\end{lemma}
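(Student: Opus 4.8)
The plan is to reduce the entire claim to a single congruence in $j$. First I would recognize that the block rotation appearing on the left-hand side is itself an ordinary rotation of $X$. Since $Y_iY_{i+1}\ldots Y_{m-1} = x_{id}x_{id+1}\ldots x_{n-1}$ and $Y_0Y_1\ldots Y_{i-1} = x_0x_1\ldots x_{id-1}$, concatenating gives precisely $x_{id}x_{id+1}\ldots x_{n-1}x_0x_1\ldots x_{id-1} = \rho^{id}(X)$. Thus the target string is the rotation of $X$ by $id$ positions, and it suffices to produce some $j \in \{0,1,\ldots,m-1\}$ with $\rho^{js}(X) = \rho^{id}(X)$. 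Because $\rho^n$ is the identity and composing rotations adds their shifts, two rotations of $X$ coincide whenever their shifts agree modulo $n$; so the problem becomes: solve $js \equiv id \pmod{n}$ for $j$.

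Next I would exploit the hypothesis $d = \gcd(n,s)$ to rewrite this congruence in terms of the $m$ blocks. Writing $s = ds'$ and $n = md$, the identity $\gcd(md,ds') = d\,\gcd(m,s')$ together with $\gcd(n,s) = d$ forces gcd$(s',m)=1$. Dividing the congruence $js \equiv id \pmod{md}$ through by the common factor $d$ then yields the equivalent congruence $js' \equiv i \pmod{m}$. This divisibility step is legitimate because both sides and the modulus are multiples of $d$, and it is exactly the reason the statement is phrased at the level of blocks of length $d$ rather than individual symbols.

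Finally, since gcd$(s',m)=1$, the residue $s'$ is a unit modulo $m$, so I would set $j \equiv i\,(s')^{-1} \pmod{m}$ and take the unique representative in $\{0,1,\ldots,m-1\}$. This $j$ satisfies $js' \equiv i \pmod m$, hence $js \equiv id \pmod n$, hence $\rho^{js}(X) = \rho^{id}(X)$, which is the desired block rotation. The only real content beyond bookkeeping is the fact gcd$(s',m)=1$: this is where the choice $d = \gcd(n,s)$ is used essentially, since it simultaneously guarantees that $s$ is a multiple of $d$ (so no rotation $\rho^{js}$ ever splits a block) and that $s'$ is invertible modulo $m$ (so that every block $Y_i$ is actually reachable). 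I expect no genuine obstacle here; once the left-hand side is identified as $\rho^{id}(X)$, the remainder is a short exercise in modular arithmetic.
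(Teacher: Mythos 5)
Your proposal is correct and follows essentially the same route as the paper: both arguments reduce the claim to the coprimality of $m$ and $s/d$ (your $s'$, the paper's $k$), so that multiples of $s/d$ sweep out all residues modulo $m$ and every block can be brought to the front. You establish $\gcd(m,s')=1$ via the identity $\gcd(md,ds')=d\gcd(m,s')$ and then invert $s'$ modulo $m$ explicitly, whereas the paper gets the same coprimality from a B\'ezout relation $pn+qs=d$; the difference is purely cosmetic.
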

\begin{proof}
    Suppose that $s=kd$.  Then each rotation $\rho^s(X)$ advances through $k$ blocks of $X$.  Thus if we can show that gcd$(m,k)=1$, we are done.  However we note that gcd$(n,s)=d$ implies that there are integers $p,q$ such that $pn+qs=d$.  This implies the following.
    \begin{eqnarray*}
        pn+qs & = & d \\
        pmd + qkd & = & d \\
        pm + qk & = & 1
    \end{eqnarray*}
    Thus the same integers $p,q$ provide confirmation that gcd$(m,k)=1$.

    %Let $n-s=kd$.  This claim is proven by showing that gcd$(m,k)=1$.  For a contradiction, suppose that there are integers $p,q,\delta$ such that $m=p\delta$ and $k=q\delta$, i.e. gcd$(m,k)=\delta >1$.  Then $n-s=kd$ implies $p\delta d - s = q \delta d$, so we must have gcd$(n,s)=\delta d > d$, which contradicts our initial assumption.
\end{proof}

\begin{thm}\label{MainPerms}
    Let $n,s \in \mathbb{Z}^+$ with $1 \leq s \leq n-1$ and let $M$ be a multiset of size $n$.  There exists an $s$-ocycle on permutations of $M$ if and only if $n-s > \hbox{gcd}(n,s)$.
\end{thm}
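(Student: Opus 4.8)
The plan is to use the transition digraph framework described in the introduction and apply Theorem~\ref{euler}: an $s$-ocycle exists if and only if the transition digraph is balanced and weakly connected. Recall that vertices are $s$-prefixes and $s$-suffixes of permutations of $M$, and each permutation contributes one edge from its $s$-prefix to its $s$-suffix. I would first dispose of the necessity direction. If $n-s \leq \hbox{gcd}(n,s) = d$, I would argue that the transition digraph fails to be connected (or balanced), obstructing an Euler tour. The intuition is that when $s$ is very close to $n$, the overlap constraint is so rigid that consecutive permutations are forced into isolated structures; in the extreme $s = n-1$ (ucycles) this recovers the classical impossibility, and the condition $n-s > d$ quantifies exactly how much slack is needed. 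I expect necessity to follow by exhibiting a nontrivial partition of the vertex set with no edges crossing in one direction, or by a parity/counting obstruction to balance.

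For the sufficiency direction, assume $n - s > d$ where $d = \hbox{gcd}(n,s)$, and verify both hypotheses of Theorem~\ref{euler}. Balance is the more routine check: for a fixed overlap string $v$ of length $s$, I would count permutations of $M$ having $v$ as an $s$-prefix versus those having $v$ as an $s$-suffix. Since a permutation of a multiset is determined by filling the remaining $n-s$ positions with the leftover multiset after removing the symbols of $v$, these two counts depend only on the multiset of symbols appearing in $v$, not on whether $v$ sits at the front or the back. Hence in-degree equals out-degree at every vertex, and the digraph is balanced. The condition $n-s > d$ should enter to guarantee there is genuine freedom in filling those middle positions, ensuring every relevant vertex actually carries edges.

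The main obstacle, and the heart of the argument, is weak connectivity. Here is where Lemma~\ref{rotations} does the real work. The idea is that given any two permutations (edges), I want to show their endpoints lie in the same weakly connected component by exhibiting a sequence of overlapping permutations linking them. The rotation function $\rho^s$ is the natural tool: repeatedly applying $\rho^s$ to a permutation cycles it through a sequence of strings whose consecutive $s$-overlaps match up, and Lemma~\ref{rotations} guarantees that these rotations let me bring any of the $m = n/d$ blocks to the front. The condition $n - s > d$ is exactly what ensures that the $s$-prefix and $s$-suffix of a permutation are not rigidly locked together (they span more than a single block of length $d$), leaving enough overlap between distinct rotated copies to connect different vertices. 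I would formalize this by showing that any two overlap vertices can be joined: first use rotations to normalize the block structure, then use the freedom in the non-overlapping middle positions to rearrange symbols and migrate from one prefix/suffix to another.

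The delicate point I anticipate having to handle carefully is the boundary behavior when $n - s = d$, which must fail, versus $n - s > d$, which must succeed; the proof of connectivity should break down precisely at equality because the middle region collapses to a single block and the rotation argument no longer produces distinct connecting edges. I would therefore make sure the sufficiency construction visibly consumes the strict inequality, and dovetail it with the necessity argument so that the two halves meet cleanly at the threshold $n - s = \hbox{gcd}(n,s)$.
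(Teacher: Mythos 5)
Your high-level plan coincides with the paper's: build the transition digraph, check balance, prove connectivity via $\rho^s$-rotations and Lemma \ref{rotations}, and show disconnection at the threshold. Your balance argument (counting arrangements of the leftover multiset $M$ minus the symbols of $v$, which is the same whether $v$ is a prefix or a suffix) is correct and in fact slightly cleaner than the paper's rotation bijection; note, though, that balance holds \emph{unconditionally} --- the hypothesis $n-s>\gcd(n,s)$ plays no role there, contrary to your suggestion that it is needed to ensure vertices carry edges. That observation matters for necessity: since the digraph is always balanced, the only possible obstruction is disconnection, and your alternative guess of ``a parity/counting obstruction to balance'' is a dead end.

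The two substantive pieces are still missing. For necessity you never identify the invariant. Writing $d=\gcd(n,s)$, note $d \mid (n-s)$, so $n-s\le d$ forces $n-s=d$; then every move exposes exactly one $d$-block at a time, so along any walk in the digraph one can permute symbols \emph{within} a block but the cyclic order of the blocks (as multisets of contents) is preserved. Two permutations of $M$ whose block sequences are not cyclic rotations of one another therefore lie in different components, and no Euler tour exists. For sufficiency your connectivity sketch stops exactly where the work begins: the concrete mechanism is that $n-s=kd$ with $k\ge 2$, so the free $(n-s)$-suffix is a window of $k\ge 2$ consecutive blocks; Lemma \ref{rotations} lets you slide that window to start at any block, hence any two adjacent positions $x_i,x_{i+1}$ --- including pairs straddling a block boundary, which is precisely what fails when $k=1$ --- can be brought into a single window and transposed, and adjacent transpositions generate all permutations. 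Saying the prefix and suffix are ``not rigidly locked together'' gestures at this but does not establish that every adjacent transposition is realizable, which is the heart of the proof.
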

\begin{proof}
    Define gcd$(n,s)=d$, and suppose that $n-s = kd > d$.  Construct the transition digraph $D$ with vertices representing $s$-permutations of $M$ and edges representing permutations of $M$.  We will show that this graph is balanced and connected, and hence eulerian.  Recall that an Euler tour in this graph corresponds to an $s$-ocycle for permutations of $M$.
    \begin{description}
        \item[Balanced:]  Let $X^{s-}=x_0x_1 \ldots x_{s-1}$ be an arbitrary vertex in the transition graph. For each possible out-edge corresponding to an $n$-permutation $X=x_0x_1 \ldots x_{n-1}$, we also have the in-edge $X'=x_{s}x_{s+1} \ldots x_{n-1}x_0x_1 \ldots x_{s-1}$.  Thus there is a one-to-one correspondence between in- and out-edges at each vertex, and so D is balanced.
        \item[Connected:]  Consider an arbitrary vertex $X^{s-}=x_0x_1 \ldots x_{s-1}$, which is an $s$-prefix of some permutation $X = x_0x_1 \ldots x_{n-1}$.  We will consider a partition of $X$ into blocks of size $d$, i.e. $X=X_0X_1 \ldots X_{m-1}$ where $X_i = x_{id}x_{id+1} \ldots x_{(i+1)d-1}$.  Our goal is to show that we can permute elements in any $k$ consecutive blocks $X_iX_{i+1} \ldots X_{i+k-1}$, which illustrates that any adjacent transpositions are possible, and hence we can reach all permutations.

            In D, rotations of $X$ correspond to the following cycle, where subscripts are computed modulo $n$.
            \begin{eqnarray*}
                x_0x_1 \ldots x_{s-1} & \rightarrow & x_{n-s}x_{n-s+1} \ldots x_{n-1} \\
                & \rightarrow & x_{n-2s}x_{n-2s+1} \ldots x_{n-s-1} \\
                & \rightarrow & x_{n-3s}x_{n-3s+1} \ldots x_{n-2s-1} \\
                & \vdots & \\
                & \rightarrow & x_{n-is}x_{n-is+1} \ldots x_{n-(i-1)s-1} \\
                & \vdots & \\
                & \rightarrow & x_0x_1 \ldots x_{s-1}
            \end{eqnarray*}
            Clearly from the vertex $X^{s-}$ we can permute all elements in $\{x_s, x_{s+1}, \ldots , x_{n-1}\}$ to determine an out-edge, and at any point we can permute the $(n-s)$-suffix of a permutation.  We would like to show that, through rotations, the $(n-s)$-suffix may contain any $k$ consecutive blocks from $\{X_0, X_1, \ldots , X_{m-1}\}$.  Consider the vertex $x_{n-is}x_{n-is+1} \ldots x_{n-(i-1)s-1}$.  How does the starting index $n-is$ correspond to $d$?
            \begin{eqnarray*}
                n-is & \equiv & in-is \pmod n \\
                & \equiv & i(n-s) \pmod n \\
                & \equiv & ikd \pmod n
            \end{eqnarray*}
            Thus the vertices in this rotation cycle always begin with a multiple of $kd$.  Our final step is to show that any multiple of $d$, say $id$, may be written as a multiple of $kd$ modulo $n$, which is done by Lemma \ref{rotations}.  Then we can rotate to start with any block desired, which is equivalent to pushing any $k$ consecutive blocks to the $(n-s)$-suffix.

            To summarize, we can perform adjacent transpositions $x_{i} \leftrightarrow x_{i+1}$ within $X^{s-}$ by rotating $X$ until $x_i$ and $x_{i+1}$ fall into the $k$ blocks in the $(n-s)$-suffix and then transposing.  Finally, by continuing along through rotations we will arrive at the vertex $x_0x_1 \ldots x_{i-1}x_{i+1}x_ix_{i+2} \ldots x_{s-1}$.  Thus adjacent transpositions are always connected, hence all permutations can be reached.
    \end{description}

    For the converse, suppose that $n-s = d = \hbox{gcd}(n,s)$.  In this case, rotations of the permutation $X=x_0x_1 \ldots x_{n-1}$ provide an $(n-s)$-suffix of length $d$ -- just one block from $X = X_0X_1 \ldots X_{m-1}$.  Thus we can always permute elements within each block, however the cyclic order of the blocks is fixed and we can perform the swap $x_i \leftrightarrow x_j$ if and only if $x_i$ and $x_j$ are in the same block.  Thus we are able to permute elements within blocks, but not permute blocks (only rotate the block order).  Hence the transition digraph connects only permutations with the same block order (rotations allowed).  Permutations with block order that are not simple rotations are not connected, so the graph is disconnected and no Euler tour exists.
\end{proof}

Note that Theorem \ref{MainPerms} agrees with the following well-known fact about universal cycles, or $(n-1)$-ocycles.

\begin{corr}
    There is no universal cycle for permutations of $M$.
\end{corr}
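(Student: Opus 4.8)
The plan is to obtain this as an immediate special case of Theorem \ref{MainPerms}, using the identification noted in the introduction that a universal cycle is precisely an $(n-1)$-ocycle. Under this identification, asking whether a universal cycle for permutations of $M$ exists is exactly asking whether an $s$-ocycle exists in the case $s = n-1$. I would therefore substitute $s = n-1$ into the necessary and sufficient condition $n - s > \gcd(n,s)$ furnished by the theorem and determine whether it can ever hold.

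Carrying out the substitution, the left-hand side becomes $n - s = n - (n-1) = 1$, while the right-hand side is $\gcd(n, n-1)$. Since any two consecutive positive integers are coprime, we have $\gcd(n, n-1) = 1$ as well, so that $n - s = 1 = \gcd(n,s)$. Hence the strict inequality $n - s > \gcd(n,s)$ fails for every admissible $n$, and by the necessity (``only if'') direction of Theorem \ref{MainPerms} no $(n-1)$-ocycle on permutations of $M$ exists. Equivalently, there is no universal cycle for permutations of $M$.

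I do not expect any genuine obstacle in this argument: the entire content is the arithmetic observation that consecutive integers are coprime, after which the corollary follows by direct appeal to the equivalence already established. The only thing to keep an eye on is the degenerate range of the parameter, since Theorem \ref{MainPerms} is stated for $1 \leq s \leq n-1$; taking $s = n-1$ keeps us inside this range precisely when $n \geq 2$, which is the only case in which a universal cycle structure is nontrivial to begin with.
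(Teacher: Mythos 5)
Your proposal is correct and follows exactly the same route as the paper: apply Theorem \ref{MainPerms} with $s = n-1$ and observe that $\gcd(n,n-1) = 1 = n-s$, so the strict inequality fails. The extra remark about the parameter range $n \geq 2$ is a minor, harmless addition the paper omits.
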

\begin{proof}
    In this case, we apply Theorem \ref{MainPerms} with $s=n-1$.  Then gcd$(n,s)=1=n-s$, so no $(n-1)$-ocycle exists.
\end{proof}

\section{Related Objects}\label{Related}

\subsection{Functions}

Many times injective, surjective, and bijective functions are represented by permutations.  We have the following facts about functions and theorems corresponding to their alternate representations.

\begin{fact}
    An injective function $f:[k] \rightarrow [n]$ may be represented by $x_1x_2 \ldots x_k$, the $k$-permutation of $[n]$ defined so that $x_i = f(i)$.
\end{fact}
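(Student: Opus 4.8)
The plan is to verify that the assignment $f \mapsto x_1x_2\ldots x_k$, with $x_i = f(i)$, defines a bijection between injective functions $f:[k]\to[n]$ and $k$-permutations of $[n]$; this bijection is exactly the content of the claimed representation. First I would define the map $\Phi$ carrying an injective $f$ to the string $\Phi(f) = x_1x_2\ldots x_k$ where $x_i = f(i)$ for each $i \in [k]$. Since each value $f(i)$ lies in $[n]$, this is certainly a string of length $k$ over the alphabet $[n]$; the only point requiring any argument is that $\Phi(f)$ has no repeated entries.

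The key step is to translate injectivity of $f$ into distinctness of the entries $x_1, \ldots, x_k$. By definition $f$ is injective precisely when $i \neq j$ implies $f(i) \neq f(j)$, equivalently $x_i \neq x_j$ whenever $i \neq j$. Thus the entries of $\Phi(f)$ are $k$ distinct elements of $[n]$ listed in a fixed order, which is by definition a $k$-permutation of $[n]$. This shows that $\Phi$ lands in the set of $k$-permutations, so the representation is well defined.

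It then remains to check that $\Phi$ is a bijection, so that the representation loses no information. For injectivity of $\Phi$, if $\Phi(f) = \Phi(g)$ then $f(i) = x_i = g(i)$ for every $i \in [k]$, whence $f = g$. For surjectivity, given any $k$-permutation $x_1x_2\ldots x_k$ of $[n]$, I would define $f:[k]\to[n]$ by $f(i) = x_i$; the distinctness of the $x_i$ forces $f$ to be injective, and $\Phi(f)$ recovers the original string. Hence $\Phi$ is the desired bijection. I do not expect a genuine obstacle here: the entire argument is the observation that \emph{injective} and \emph{distinct coordinates} name the same condition, and the only care needed is to confirm the correspondence runs in both directions rather than merely producing a string from each function.
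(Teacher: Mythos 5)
Your argument is correct: the paper states this as an unproved \emph{Fact} precisely because it is the definitional observation that injectivity of $f$ and distinctness of the coordinates $x_i = f(i)$ are the same condition, and your verification that the correspondence is a bijection is the standard (and essentially only) way to spell this out.
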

\begin{thm}
    \emph{\cite{kPerms}}  For all $n,s,k \in \mathbb{Z}^+$ with $1 \leq s < k < n$, there is an $s$-ocycle for $k$-permutations of $[n]$.
\end{thm}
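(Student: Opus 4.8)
The plan is to follow the transition-digraph method used for Theorem \ref{MainPerms}. I would build the directed multigraph $D$ whose vertices are the $s$-permutations of $[n]$ (these are exactly the possible $s$-prefixes and $s$-suffixes, and every $s$-permutation extends to a $k$-permutation since $k<n$) and whose edges are the $k$-permutations of $[n]$, each directed from its $s$-prefix to its $s$-suffix. By Theorem \ref{euler} it then suffices to show that $D$ is balanced and weakly connected, since an Euler tour in $D$ corresponds bijectively to an $s$-ocycle on $k$-permutations of $[n]$.

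For \textbf{balancedness} I would mirror the bijective argument in Theorem \ref{MainPerms}. Fix a vertex $u=u_1\ldots u_s$. Every out-edge is a $k$-permutation $u_1\ldots u_s\,y_1\ldots y_{k-s}$ where $y_1\ldots y_{k-s}$ is an ordered choice of distinct symbols from $[n]\setminus\{u_1,\ldots,u_s\}$. Sending it to $y_1\ldots y_{k-s}\,u_1\ldots u_s$ produces a $k$-permutation with $s$-suffix $u$, hence an in-edge at $u$, and this map is a bijection between the out-edges and in-edges at $u$ (both enumerated by $(n-s)!/(n-k)!$). Thus $D$ is balanced.

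The crux is \textbf{connectivity}, which I would establish directly, exploiting the slack $k<n$. The key observation is that a directed walk in $D$ is the same thing as a ``text'' $Z=z_1z_2\ldots z_L$ in which every window of $k$ consecutive symbols has distinct entries: the edges of the walk are the windows aligned at offsets that are multiples of $k-s$, and consecutive aligned windows automatically overlap in $s$ symbols, so the chaining condition holds for free. Hence to connect a vertex $u$ to a vertex $v$ it suffices to build such a text with $z_1\ldots z_s=u$, with $z_{L-s+1}\ldots z_L=v$, and with $L\equiv s\pmod{k-s}$ so that the aligned windows end exactly at the end of $Z$. Because $n\ge k+1$, at every step there are at least two symbols avoiding the previous $k-1$, so the text can always be extended while keeping every $k$-window distinct; this freedom lets me start from any $k$-permutation extending $u$, run a long filler, steer the final symbols to spell $v$, and pad to satisfy the congruence on $L$. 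Therefore any two vertices are joined by a walk, so $D$ is strongly, hence weakly, connected.

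The main obstacle is exactly this connectivity step, and it is where both strict inequalities are used: $s<k$ guarantees $k-s\ge 1$, so the windows genuinely slide and the $s$-prefix and $s$-suffix of a length-$k$ word are well defined, while $k<n$ supplies the spare symbol(s) needed to keep every $k$-window distinct while still retaining enough freedom to route the text to a prescribed ending. What remains is routine bookkeeping that I would not grind through here: checking that the initial window $z_1\ldots z_k$ can be taken to be a valid $k$-permutation, that the endpoint-steering to $v$ can always be carried out, and that the length congruence $L\equiv s\pmod{k-s}$ can always be met by lengthening the filler.
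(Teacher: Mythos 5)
First, note that the paper does not prove this statement at all: it is quoted from \cite{kPerms} and used as a black box, so there is no in-paper argument to compare yours against. Judged on its own terms, your setup is right (the transition digraph on $s$-permutations with $k$-permutations as edges is the standard device), and your balance argument is correct and complete: the rotation $u_1\ldots u_s y_1\ldots y_{k-s}\mapsto y_1\ldots y_{k-s}u_1\ldots u_s$ is a bijection between out- and in-edges at $u$, both sets having size $(n-s)!/(n-k)!$. The reformulation of a walk as a text in which every $k$-window is rainbow is also a legitimate (sufficient) device, and the overlap arithmetic checks out.

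The gap is in connectivity, where the two steps you defer as ``routine bookkeeping'' are in fact the entire content of the theorem. (i) \emph{Steering:} the observation that the greedy extension always has $n-k+1\ge 2$ available symbols shows the text never gets stuck, but says nothing about \emph{which} terminal windows are reachable; you need to show that from an arbitrary rainbow $(k-1)$-window you can reach one whose last $s$ symbols are $v$, which is essentially the (nontrivial) connectivity underlying universal cycles for $k$-permutations and requires an actual argument, e.g.\ a sorting-by-transpositions scheme like the one in Theorem \ref{MainPerms}. (ii) \emph{The length congruence:} you must land $v$ so that $L\equiv s\pmod{k-s}$, and ``lengthening the filler'' does not obviously let you control this residue --- you need closed walks through a fixed state whose lengths generate all of $\mathbb{Z}_{k-s}$, e.g.\ closed walks of the coprime lengths $k$ and $k+1$ (the length-$(k+1)$ one uses $n\ge k+1$ to supply a fresh symbol). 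This residue issue is not a technicality to be waved off: the paper's own Theorems \ref{MainPerms} and \ref{JSMain} show that exactly this kind of modular obstruction is what kills $s$-ocycles in the $k=n$ case, so the proof must exhibit explicitly why the slack $k<n$ dissolves it. With (i) and (ii) supplied the proof goes through, but as written the proposal asserts rather than proves the connectivity.
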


\begin{fact}
    A surjective function $f:[n] \rightarrow [h]$ may be represented by $x_1x_2 \ldots x_n$, the string with ground set $[h]$ defined so that $x_i = f(i)$.
\end{fact}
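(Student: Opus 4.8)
The plan is to establish that the assignment $f \mapsto x_1 x_2 \ldots x_n$ with $x_i = f(i)$ is a bijection between the surjective functions $f \colon [n] \to [h]$ and the length-$n$ strings over the alphabet $[h]$ whose ground set (the set of symbols that actually occur) is exactly $[h]$. Since the statement is really a representation fact, the work consists entirely of unwinding the definition of surjectivity and checking that both directions of the correspondence are well defined.

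First I would record the unrestricted correspondence: any function $f \colon [n] \to [h]$ is both determined by and determines the tuple $(f(1), \ldots, f(n))$, so functions $[n] \to [h]$ are in bijection with strings $x_1 \ldots x_n \in [h]^n$ via $x_i = f(i)$. This map and its inverse, sending $x_1 \ldots x_n$ to the function $i \mapsto x_i$, are mutually inverse, so no information is lost in passing to the string representation. Next I would cut this bijection down to the surjective functions. By definition $f$ is surjective precisely when for every $j \in [h]$ there is some $i \in [n]$ with $f(i) = j$; written in terms of the string this says exactly that every symbol $j \in [h]$ appears among $x_1, \ldots, x_n$, i.e. that the ground set of the string is $[h]$. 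Restricting the above bijection to surjective $f$ therefore yields precisely the strings of length $n$ over $[h]$ with ground set $[h]$, which is the claim.

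There is essentially no obstacle here: the content is definitional, and the only point requiring care is reading ``ground set $[h]$'' as the condition that all of $[h]$ occurs, rather than merely that the symbols are drawn from $[h]$, since it is the former that matches surjectivity. With the representation fixed, one can then set up the transition digraph on these strings exactly as in the permutation case of Theorem \ref{MainPerms}, using the $s$-prefix/$s$-suffix overlap structure, when proceeding to the existence question for $s$-ocycles on surjections.
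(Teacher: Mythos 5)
Your proposal is correct and matches the paper's treatment: the paper states this as a \emph{Fact} with no proof at all, since it is purely the definitional observation that the tuple of values $(f(1),\ldots,f(n))$ determines $f$ and that surjectivity is equivalent to every symbol of $[h]$ occurring, i.e.\ to the string having ground set $[h]$. Your careful reading of ``ground set $[h]$'' as requiring all of $[h]$ to appear is exactly the intended one, as confirmed by the paper's later identification of these strings with multiset permutations over $[h]$.
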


In \cite{ucwo}, it is shown that surjective functions are also represented by weak orders of $[n]$ with height $h-1$.  This observation is followed by the following theorem.

\begin{thm}
    \emph{\cite{ucwo}}  For all $n,s,h \in \mathbb{Z}^+$ with $1 \leq s \leq n-2$, gcd$(s,n)=1$, and $0 \leq h \leq n-1$, there is an $s$-ocycle for $\mathcal{W}(n,h)$.
\end{thm}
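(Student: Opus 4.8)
The plan is to build the transition digraph $D$ whose vertices are the $s$-prefixes occurring among members of $\mathcal{W}(n,h)$ and whose edges are the members themselves, each running from its $s$-prefix to its $s$-suffix; by Theorem \ref{euler} it then suffices to prove $D$ is balanced and weakly connected. Throughout I would regard an element of $\mathcal{W}(n,h)$ as a string $x_0x_1\ldots x_{n-1}$ over a fixed alphabet of size $h+1$ in which every letter occurs at least once (the surjectivity condition), and I would call the multiset $\{x_0,\ldots,x_{n-1}\}$ its \textbf{content}. Observe at the outset that $\gcd(s,n)=1$ together with $s\le n-2$ gives $n-s\ge 2>1=\gcd(s,n)$, so the permutation result of Theorem \ref{MainPerms} will be available on every fixed content.

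Balancedness is the routine half. For a vertex $v=x_0\ldots x_{s-1}$, the rotation $X\mapsto\rho^s(X)$ sends each out-edge $X=x_0\ldots x_{n-1}$ to the string $\rho^s(X)=x_s\ldots x_{n-1}x_0\ldots x_{s-1}$, whose $s$-suffix is exactly $v$, hence an in-edge at $v$; since $\rho^s$ is a bijection on strings and preserves content (so preserves membership in $\mathcal{W}(n,h)$), it restricts to a bijection between the out-edges and the in-edges at $v$, precisely as in Theorem \ref{MainPerms}. Thus $D$ is balanced.

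The substance is connectivity, which I would establish in three steps. First, the elements of $\mathcal{W}(n,h)$ of a fixed content $M$ are exactly the permutations of the multiset $M$, so the sub-digraph they span is the transition digraph of Theorem \ref{MainPerms}; because $n-s>\gcd(n,s)$, that theorem's connectivity argument places all elements of a single content in one weak component of $D$. Second, I would connect \emph{adjacent} contents, meaning contents $M$ and $M'$ differing by replacing one occurrence of a letter $a$ (which occurs at least twice in $M$) by a letter $b$. Since $M$ and $M'$ agree on a sub-multiset of size $n-1\ge s$, I can pick $s$ letters common to both and form a string $v$ on them; using Step 1, I rearrange a content-$M$ element and a content-$M'$ element each to have $s$-prefix $v$, and both remain valid out-edges at the single vertex $v$ (the leftover letters fill the $(n-s)$-suffix and each string stays surjective). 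Hence $M$ and $M'$ meet at $v$ and lie in one component.

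Finally, I would note that the ``content graph''---with vertices the admissible contents (equivalently, compositions of $n$ into $h+1$ positive parts) and edges the adjacent pairs of Step 2---is connected, since any positive composition can be turned into any other by repeatedly shifting a single unit from a part of size at least $2$ to another part without ever violating positivity. Combining the three steps shows all of $\mathcal{W}(n,h)$ lies in one weak component of $D$, so $D$ is weakly connected; with balancedness and Theorem \ref{euler} this produces the Euler tour, hence the $s$-ocycle. I expect the main obstacle to be Step 2: within-content connectivity is inherited wholesale from Theorem \ref{MainPerms}, but linking distinct contents is genuinely new, and the crucial point is that a prefix has length $s\le n-2<n-1$, so two contents differing in a single symbol always share enough letters to be routed through a common prefix vertex.
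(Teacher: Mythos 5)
Your proposal is correct, but note first that the paper itself offers no proof of this particular statement---it is quoted from \cite{ucwo}---so the only in-paper argument to compare against is the proof of the strengthened theorem that immediately follows (same conclusion with the hypothesis $\gcd(s,n)=1$ deleted). That proof takes a different route from yours: it fixes a single \emph{minimum vertex} $V^{s-}$ and walks every vertex to it by repeatedly locating the first position where the current prefix disagrees with $V^{s-}$, rotating that position into the $(n-s)$-suffix, and exploiting a duplicated letter to swap in the desired symbol; content classes are never isolated as an intermediate object. Your argument instead layers three cleaner pieces: within-content connectivity inherited wholesale from Theorem \ref{MainPerms} (legitimate here, since $\gcd(s,n)=1$ and $s\le n-2$ give $n-s\ge 2>1=\gcd(n,s)$), a common-prefix link between contents differing in one letter (correctly using $s\le n-2\le n-1$ so that the two contents share at least $s$ letters), and connectivity of the graph of positive compositions of $n$ under unit shifts. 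All three steps check out, and the modularity is attractive. What the paper's substitution-based approach buys, and yours does not, is the removal of the gcd hypothesis: when $\gcd(n,s)=d>1$ and $n-s=d$ (e.g.\ $n=6$, $s=4$), Theorem \ref{MainPerms} says the within-content digraph is \emph{disconnected}, so your Step 1 fails outright, whereas cross-content duplicate substitutions still stitch the pieces together. So your proof establishes exactly the cited statement but would need its first step replaced to reach the paper's stronger version.
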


We are able to improve this theorem as follows.

\begin{thm}
    For all $n,s,h \in \mathbb{Z}^+$ with $1 \leq s \leq n-2$ and $h \leq n-1$ there is an $s$-ocycle for strings with ground set $[h]$.
\end{thm}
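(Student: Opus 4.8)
The plan is to reuse the transition-digraph machinery behind Theorem \ref{MainPerms}. I would build the digraph $D$ whose vertices are the length-$s$ strings that occur as an $s$-prefix or $s$-suffix of some surjective string (a length-$n$ string over $[h]$ in which every symbol of $[h]$ actually appears), and whose edges are the surjective strings themselves, each directed from its $s$-prefix to its $s$-suffix. An Euler tour of $D$ is exactly an $s$-ocycle, so by Theorem \ref{euler} it suffices to check that $D$ is balanced and weakly connected. The one structural feature I would lean on throughout is that $h \leq n-1$ forces $n > h$, so every surjective string repeats at least one symbol; this repeated symbol plays the role that the hypothesis $n-s > \hbox{gcd}(n,s)$ played for permutations, and it is why the weaker bound $s \leq n-2$ is enough here.

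Balance is the easy half and goes exactly as in Theorem \ref{MainPerms}. Fixing a vertex $v = x_0 \ldots x_{s-1}$, the rotation $X \mapsto \rho^s(X)$ sends a surjective string with $s$-prefix $v$ to $x_s \ldots x_{n-1}x_0 \ldots x_{s-1}$, whose $s$-suffix is $v$. Since a rotation merely permutes positions it preserves the multiset of symbols, hence preserves surjectivity onto $[h]$, so $\rho^s$ restricts to a bijection between the out-edges and the in-edges at $v$ (with inverse $\rho^{n-s}$). Thus every vertex is balanced.

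The real content is weak connectivity, where I would combine rotations with the freedom to overwrite the editable suffix. As in Theorem \ref{MainPerms}, write $X = X_0X_1 \ldots X_{m-1}$ in blocks of length $d = \hbox{gcd}(n,s)$, with $m = n/d$, so that the $(n-s)$-suffix is the last $k = (n-s)/d$ blocks. Any two surjective strings sharing an $s$-prefix are both out-edges at a common vertex and so lie in the same component; consequently, from any string I may rewrite the entire $(n-s)$-suffix to any content that keeps the whole string surjective without leaving the component. By Lemma \ref{rotations}, repeated $s$-rotations realize every cyclic shift of the block sequence, so any block, hence any position, can be brought into this editable suffix. Therefore the component of $X$ is closed under the move: bring a chosen block into the suffix and overwrite it with any symbols, subject only to keeping the overall string surjective.

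It remains to show these moves connect every surjective string to one canonical string, say $C = 1\,2\,\cdots\,h\,1\,1\,\cdots\,1$, and this is the main obstacle because surjectivity must be respected at every intermediate step. The idea is to treat the guaranteed repeated symbol as a free token: before overwriting a block I would first duplicate, among the already-fixed positions, any symbol of $[h]$ that the block currently supplies uniquely. This is feasible because the protected region eventually has length at least $h$ (as $n \geq h+1$) and because $n-s \geq 2$ gives room to edit more than one symbol at a time, avoiding the one-symbol-at-a-time freezing that disconnects the permutation digraph when $n-s = d$. Carrying a full copy of $[h]$ in the protected region, I can then overwrite the remaining blocks in turn to agree with $C$ while never losing surjectivity. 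Verifying that this token-passing can always be completed, so that the surjective strings form a single weakly connected component, is the crux; granting it, Theorem \ref{euler} finishes the proof.
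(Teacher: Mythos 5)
Your setup coincides with the paper's: the transition digraph on $s$-overlaps, balance via the rotation bijection $X \mapsto \rho^s(X)$, and connectivity by steering an arbitrary string toward a canonical one, using the symbol that must repeat because $h \leq n-1$ as the slack that surjectivity demands. But you stop exactly at the step that carries the content of the theorem: ``verifying that this token-passing can always be completed \ldots is the crux; granting it, Theorem \ref{euler} finishes the proof.'' That is the statement to be proved, not a step one may grant. Moreover, the one concrete mechanism you offer does not work as described: duplicating a needed symbol ``among the already-fixed positions'' overwrites the part of the string you have already forced to agree with the canonical word, so the procedure need not make progress, and nothing in the sketch supplies a terminating measure.

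The paper closes this gap with a letter-at-a-time induction that never disturbs the settled prefix. Let $i$ be the least index at which the current $s$-prefix disagrees with the target $V^{s-}$ (the $s$-prefix of $12\cdots h\,h\cdots h$). If the letter $x_i$ occurs at least twice in $X$, rotate via Lemma \ref{rotations} until the $d$-block containing position $i$ leaves the prefix, and exchange the out-edge for the in-edge in which $x_i$ has been replaced by $v_i$; this replacement preserves surjectivity precisely because $x_i$ has another occurrence. If $x_i$ occurs only once, first take some letter that is duplicated (one exists since $h \leq n-1$), overwrite one of its spare occurrences with an extra copy of $x_i$ by the same rotate-and-swap-edges move, and then apply the first case. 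Each round strictly increases agreement with $V^{s-}$, so the walk reaches the minimum vertex. Your outline would become a proof once the block-at-a-time ``carry a full copy of $[h]$ in the protected region'' plan is replaced by this kind of explicit, monotone bookkeeping; as written, the connectivity of the digraph — the only nontrivial assertion in the theorem — is asserted rather than established.
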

\begin{proof}
    We will show that the corresponding transition graph is eulerian.
    \begin{description}
        \item[Balanced:]  Consider a vertex $X^{s-} = x_1x_2 \ldots x_s$.  $X^{s-}$ is an $s$-prefix of the string $X=x_1x_2 \ldots x_n$ where $X$ has ground set $[h]$.  Since $x_{s+1}x_{s+2} \ldots x_nx_1x_2 \ldots x_s$ is also a string with ground set $[h]$, there is a bijection between in- and out-edges at $X^{s-}$.  Hence the graph is balanced.
        \item[Connected:]  Define the minimum vertex $V^{s-}$ to be the $s$-prefix of the permutation $V = 12 \cdots mm \cdots m$.  Let $X = x_1x_2 \ldots x_n$ be an arbitrary multiset permutation with ground set $[h]$, and let $X^{s-}$ be the $s$-prefix.  We will show a path from $X^{s-}$ to $V^{s-}$
            exists in the transition graph.

            Compare $X^{s-}$ and $V^{s-}$, and let $i$ be the least index such that $X^{s-}(i) \neq V^{s-}(i)$.  Note that since $h \leq n-1$, some element from $[h]$ must appear at least twice in $X$.  We will refer to any element appearing more than once as a \textbf{duplicate}.  We have two cases.

            \begin{enumerate}
                \item  If the letter $x_i \in [h]$ appears twice in $X$:

                Let $d = \hbox{gcd}(n,s)$, and rotate $X$ until the $d$-block containing $x_i$ is first.  If we rotate $X$ again by following an out-edge in the graph, we have arrived at a vertex $A$ representing an $s$-substring of $X$ without $x_i$.  Since $x_i$ also appears elsewhere in $X$, we can follow (backwards) the in-edge that is identical to $A$ except with $x_i$ replaced by $v_i$.  Now we are at an edge corresponding to an $s$-substring of $x_1x_2 \ldots x_{i-1}v_ix_{i+1} \ldots x_n$, so we are one step closer to the minimum vertex.

                \item  If the letter $x_i \in [h]$ appears exactly once in $X$:

                Since $x_i$ is not a duplicate in $X$, some other letter $x_j \in [h]$ is a duplicate.  In this case, we proceed as in case 1 to replace $x_j$ with the letter $x_i$.  Then $x_i$ is a duplicate so we may follow case 1 again to replace $x_i$ with $v_i$.  At this point we have moved closer to the minimum vertex.
            \end{enumerate}

            Continuing until we have transformed $X^{s-}$ to $V^{s-}$ produces a path from $X^{s-}$ to the minimum vertex.  Hence the graph is connected.
    \end{description}
    As the transition graph is balanced and connected, it is eulerian by Theorem \ref{euler}.
\end{proof}

\begin{fact}
    A bijective function $f:[n] \rightarrow [n]$ may be represented by $x_1x_2 \ldots x_n$, the permutation of $[n]$ defined so that $x_i = f(i)$.
\end{fact}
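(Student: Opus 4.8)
The plan is to verify that the proposed encoding is a \emph{faithful} representation, that is, to show that the assignment $f \mapsto x_1x_2 \cdots x_n$ with $x_i = f(i)$ is a bijection between the set of bijective functions $f:[n] \to [n]$ and the set of permutation strings of $[n]$, where by a permutation string I mean a length-$n$ word in which each element of $[n]$ occurs exactly once. Once this correspondence is established, the statement that a bijection may be represented by such a string follows immediately, and the representation is seen to lose no information.

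First I would check that the encoded word $x_1 \cdots x_n$ is in fact a permutation string. Since $f$ is injective, the letters $x_i = f(i)$ are pairwise distinct; hence the word consists of $n$ distinct entries all drawn from the $n$-element set $[n]$, and a counting argument forces each element of $[n]$ to appear exactly once. (Surjectivity of $f$ gives the same conclusion directly, since it guarantees that every value in $[n]$ is attained.) Thus the encoding does land in the claimed target set.

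Next I would construct the inverse map: given any permutation string $x_1 \cdots x_n$, define $g:[n] \to [n]$ by $g(i) = x_i$. Distinctness of the letters makes $g$ injective, and since $[n]$ is finite an injective self-map is automatically surjective, so $g$ is a bijection. A direct check confirms that the two constructions are mutually inverse, which would complete the verification that the representation is well-defined and bijective.

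I do not expect any genuine obstacle here; the statement is simply the classical identification of the group of bijections on $[n]$ with its set of one-line words, and the only point worth stating explicitly is that finiteness upgrades injectivity to bijectivity in both directions. This is presumably why the analogous Facts for injective and surjective functions appear without proof: each is verified by precisely the same argument, restricted to $k$-permutations of $[n]$ in the injective case, or to words whose letters exhaust $[h]$ in the surjective case.
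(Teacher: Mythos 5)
Your argument is correct: the paper states this as a Fact without proof precisely because it is the standard identification of bijections $f:[n]\to[n]$ with one-line permutation words, and your verification (injectivity gives distinct letters, finiteness upgrades injectivity to bijectivity for the inverse map) is exactly the routine check the paper implicitly relies on. Nothing is missing and nothing differs in substance from what the paper intends.
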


\begin{thm}
    Let $n,s \in \mathbb{Z}^+$ with $1 \leq s \leq n-1$.  There exists an $s$-ocycle on permutations of $[n]$ if and only if $n-s>\hbox{gcd}(n,s)$.
\end{thm}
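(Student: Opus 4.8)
The plan is to recognize this theorem as nothing more than the special case of Theorem \ref{MainPerms} in which the multiset $M$ consists of $n$ distinct elements. Specifically, I would set $M = [n]$, which is a multiset of size $n$ (indeed a set, with no repeated entries), so that permutations of $M$ are precisely the strings $x_1 x_2 \ldots x_n$ representing bijective functions $f:[n] \to [n]$ via $x_i = f(i)$, exactly as described in the preceding Fact. The hypothesis $1 \leq s \leq n-1$ matches verbatim, so Theorem \ref{MainPerms} applies directly and yields the claimed equivalence: an $s$-ocycle on permutations of $[n]$ exists if and only if $n - s > \hbox{gcd}(n,s)$.

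There is essentially no obstacle to surmount here. The transition-digraph construction and the balanced/connected analysis carried out in the proof of Theorem \ref{MainPerms} never invoked any coincidence or repetition among the entries of $M$; both the rotation-based pairing of in- and out-edges (for the balanced condition) and the adjacent-transposition argument built on Lemma \ref{rotations} (for connectedness in the forward direction), as well as the block-order obstruction (for the converse), go through identically when all $n$ letters are distinct. The only point genuinely worth noting is that the string representation of bijective functions coincides with the permutation-of-$M$ representation used in Section \ref{Permutations}, which the preceding Fact guarantees. Accordingly, I would present the proof in a single line: apply Theorem \ref{MainPerms} with $M = [n]$.
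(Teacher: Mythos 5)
Your proposal is correct and matches the paper's own proof exactly: the paper also disposes of this theorem in one line by applying Theorem \ref{MainPerms} with $M = [n]$. Your additional remarks about why the specialization is harmless are accurate but not needed.
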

\begin{proof}
    Use Theorem \ref{MainPerms} with $M = [n]$.
\end{proof}

\subsection{Juggling Sequences}\label{JS}

We begin with some lemmas that will help us to prove our main result.

\begin{lemma}\label{rotations}
    Let $R=r_0r_1 \ldots r_{n-1}$ be a string, and let $1 \leq s \leq n$.  Then $R' = \rho^s(R) = r_{0-s}r_{1-s} \ldots r_{n-1-s}$ (where addition is modulo $n$) is a valid juggling sequence if and only if $R$ is a valid juggling sequence.
\end{lemma}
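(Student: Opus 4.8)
The plan is to reduce the validity of a juggling sequence to a bijectivity statement and then observe that rotation acts by \emph{conjugation} by a cyclic shift, which preserves bijectivity. Recall that, by the definition of the permutation sequence $\Pi_T$, a string $T = t_0 t_1 \ldots t_{n-1}$ with nonnegative entries is a valid juggling sequence if and only if the associated map $\phi_T \colon \mathbb{Z}_n \to \mathbb{Z}_n$ given by $\phi_T(i) = (i + t_i) \bmod n$ is a permutation of $\mathbb{Z}_n$; this is exactly the statement that $\bigl|\{\, i + t_i \bmod n \,\}\bigr| = n$. Since a rotation merely reorders the entries of $R$, it preserves both the nonnegativity of the entries and the multiset of heights (hence the ball count $b$), so the only thing that must be checked is that bijectivity of the associated map is preserved.

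First I would write down the map associated to $R' = \rho^s(R)$, where $R'_i = r_{(i-s) \bmod n}$. Substituting $j = (i - s) \bmod n$ gives
$$\phi_{R'}(i) = \bigl(i + r_{(i-s)\bmod n}\bigr) \bmod n = \bigl(s + (j + r_j)\bigr) \bmod n = \bigl(s + \phi_R(j)\bigr) \bmod n.$$
Denoting by $\sigma_s \colon x \mapsto (x + s) \bmod n$ the cyclic shift by $s$ on $\mathbb{Z}_n$, and noting that $i = \sigma_s(j)$, this identity reads $\phi_{R'} \circ \sigma_s = \sigma_s \circ \phi_R$, that is,
$$\phi_{R'} = \sigma_s \circ \phi_R \circ \sigma_s^{-1}.$$

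The conclusion is then immediate: $\sigma_s$ is a bijection of $\mathbb{Z}_n$, so the composite $\sigma_s \circ \phi_R \circ \sigma_s^{-1}$ is a bijection if and only if $\phi_R$ is. Hence $\phi_{R'}$ is a permutation of $\mathbb{Z}_n$ exactly when $\phi_R$ is, and by the validity criterion $R'$ is a valid juggling sequence if and only if $R$ is.

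I expect the only genuine obstacle to be the bookkeeping with index arithmetic modulo $n$: one must verify that the rotation shifts the time index and the landing value compatibly, so that the set of landing positions is simply translated by $s$, which is precisely what the conjugation identity makes rigorous. If one prefers to sidestep the general index manipulation, an alternative is to prove the single-step case $s = 1$ directly and then iterate, since $\rho^s$ is the $s$-fold composition of $\rho^1$; but the conjugation argument dispatches all values of $s$ at once.
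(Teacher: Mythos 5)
Your proof is correct and is essentially the same argument as the paper's: the key computation $\phi_{R'}(i) = i + r_{i-s} = \big((i-s) + r_{i-s}\big) + s$ is exactly the index shift the paper uses to transfer a collision in $\Pi_{R'}$ to a collision in $\Pi_R$. Your conjugation identity $\phi_{R'} = \sigma_s \circ \phi_R \circ \sigma_s^{-1}$ is just a cleaner packaging of that same step, and it correctly yields the if-and-only-if.
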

\begin{proof}
    We will check the corresponding permutation sequence for $R'$ and show that it is valid.  Suppose for a contradiction that there are $i,j \in \{0, 1, \ldots , n-1\}$ with $$r'_{i}+i \equiv r'_{j}+j \pmod n.$$  Then we have:
    \begin{eqnarray*}
        r'_i+i & \equiv & r'_j + j \pmod n \\
        r_{i-s}+i & \equiv & r_{j-s}+j \pmod n \\
        r_{i-s}+i-s & \equiv & r_{j-s} + j-s \pmod n \\
        r_k + k & \equiv & r_\ell + \ell \pmod n
    \end{eqnarray*}
    Thus $\Pi_{R'}$ is valid if and only if $\Pi_R$ is valid.
\end{proof}

\begin{lemma}\label{perms}
    Let $T = t_0t_1 \ldots t_{n-1}$ be a juggling sequence, and let $0 \leq i,s \leq n-1$.  Then $$\Pi_{\rho^s(T)}(i) = \rho^s(\Pi_T)(i)-s \pmod n.$$
\end{lemma}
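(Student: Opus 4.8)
The plan is to prove the identity by directly unwinding the two definitions involved---the permutation sequence, whose $i$-th entry is $\Pi_T(i) \equiv t_i + i \pmod n$, and the rotation function $\rho^s$---and checking that both sides collapse to the same expression. Since the statement is a pointwise equality of two functions on $\{0,1,\ldots,n-1\}$, it suffices to fix an arbitrary index $i$ and evaluate each side separately; no induction or case analysis is needed.

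First I would set $T' = \rho^s(T)$ and record, from the definition of the rotation function, that the $i$-th entry of $T'$ is $t_{i+s}$, with the index read modulo $n$. Substituting into the definition of the permutation sequence gives the left-hand side
$$\Pi_{\rho^s(T)}(i) = T'_i + i \equiv t_{i+s} + i \pmod n.$$
For the right-hand side I would instead form $\Pi_T$ first, recording $\Pi_T(j) \equiv t_j + j \pmod n$, then rotate this already-formed string so that $\rho^s(\Pi_T)(i) = \Pi_T(i+s) \equiv t_{i+s} + (i+s) \pmod n$, and finally subtract $s$. Both sides then reduce to $t_{i+s} + i \pmod n$, which is exactly the claimed identity.

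The one point requiring care---and the single place a sign error could creep in---is bookkeeping the rotation convention consistently across the two strings $T$ and $\Pi_T$. Rotating $T$ slides the throw height $t_{i+s}$ into position $i$, and the permutation value $t_{i+s}+i$ is then recomputed for the \emph{new} position; by contrast, rotating the already-assembled string $\Pi_T$ carries along the value $t_{i+s}+(i+s)$ that was computed for the \emph{old} position $i+s$. The term $-s$ on the right-hand side is precisely the correction reconciling these two viewpoints. Conceptually, $\Pi_T(i) \equiv i + t_i$ is the beat on which the ball thrown at time $i$ lands, and rotating the pattern by $s$ shifts the time origin, so every landing time drops by $s$; this is the juggling-theoretic reason the offset must appear.

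Once the indexing convention for $\rho^s$ is pinned down, the verification is a one-line modular computation, so I would present it as a short direct calculation rather than invoking earlier machinery. I would only note in passing that the preceding lemma guarantees $\rho^s(T)$ is again a valid juggling sequence, so that $\Pi_{\rho^s(T)}$ is a genuine permutation and the identity is an equality of well-defined permutation sequences.
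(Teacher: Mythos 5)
Your proposal is correct and follows essentially the same route as the paper: both unwind the definitions to get $\rho^s(T)(i) = t_{i+s}$, hence $\Pi_{\rho^s(T)}(i) \equiv t_{i+s}+i$, and observe that $\rho^s(\Pi_T)(i) \equiv t_{i+s}+i+s$, so the two differ by exactly $s$. The paper writes this as a single chain of congruences from left side to right side, while you evaluate both sides separately, but the computation is identical.
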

\begin{proof}
    \begin{eqnarray*}
        \Pi_{\rho^s(T)}(i) & = & \rho^s(T)(i) + i \pmod n \\
        & = & T(i+s)+i \pmod n \\
        & = & T(i+s) + i+s-s \pmod n \\
        & = & \Pi_T(i+s)-s \pmod n \\
        & = & \rho^s(\Pi_T)(i)-s \pmod n
    \end{eqnarray*}
\end{proof}

\begin{lemma}\label{reductions}
    Fix $n,s,b \in \mathbb{Z}^+$ with $1 \leq s \leq n-1$.  Define $D$ to be the $s$-ocycle transition digraph for juggling sequences of length $n$ using at most $b$ balls.  From any vertex $v_0v_1 \ldots v_{s-1}$, there exists a path to any vertex $v_0'v_1' \ldots v_{s-1}'$ where $v_i' \equiv v_i \pmod n$.
\end{lemma}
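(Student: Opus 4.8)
The plan is to assemble the path from a sequence of \emph{elementary moves}, each of which changes a single coordinate $v_i$ by $\pm n$ while fixing the others; chaining these connects $v$ to $v'$, since corresponding coordinates are congruent modulo $n$ and hence differ by a multiple of $n$. The engine driving each move is Lemma \ref{rotations}: a rotation of a valid juggling sequence is again valid, so every juggling sequence $T$ of length $n$ gives a closed rotation walk in $D$ that visits, in turn, the $s$-windows of $T$ beginning at the positions $0,d,2d,\ldots,(m-1)d$, where $d=\gcd(n,s)$ and $m=n/d$. This is the same cyclic structure exploited in Theorem \ref{MainPerms}.

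To perform one elementary move — say changing coordinate $i$ of the current vertex $w$ from height $a$ to $a\pm n$ — I would fix a juggling sequence $T$ whose $s$-prefix is $w$ and let $\tilde T$ be obtained from $T$ by replacing its position-$i$ entry $a$ with $a\pm n$. Since the permutation sequence depends only on heights modulo $n$, $\tilde T$ is again valid, and its rotation walk visits exactly the same windows as that of $T$ except at windows that actually contain position $i$. The key combinatorial observation is that the windows meeting a fixed position are precisely $k:=s/d$ of the $m$ available ones (any $s=kd$ consecutive residues contain exactly $k$ multiples of $d$); since $s\le n-1$ forces $k<m$, at least one window $u$ avoids position $i$. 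There the two walks share a vertex, so I can follow $T$'s walk from $w$ to $u$ and then $\tilde T$'s walk from $u$ on around to its window $0$, namely $\tilde w$. This realizes the elementary move as a genuine directed path in $D$.

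The main obstacle is not connectivity but the ball bound: every edge traversed must be a juggling sequence of at most $b$ balls. I would handle this by always choosing $T$ (hence $\tilde T$) with a minimum-ball completion on the free coordinates $s,\ldots,n-1$. Because every vertex occurring along the path shares the fixed residue pattern, that minimum completion contributes the same amount, so the ball count of a minimum sequence with $s$-prefix $w$ equals $\tfrac1n\sum_{j<s}w_j$ plus a quantity depending only on the residues; in particular it is monotone in the prefix sum and changes by exactly $\pm1$ under an elementary move. Rotation preserves the height multiset, so all edges of $T$'s walk carry one ball count and all edges of $\tilde T$'s walk carry that count $\pm1$.

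It then remains only to order the elementary moves so that the ball count never exceeds $b$. I would first decrease every coordinate that must shrink and afterward increase every coordinate that must grow: along the decreasing phase the prefix sum only falls, staying below its initial (admissible) value, and along the increasing phase it only rises to the final (admissible) value, staying below that. Hence the minimum ball count is at most $b$ at every intermediate vertex, and since each elementary move links two such admissible vertices using edges of ball count at most $b$, the concatenated walk is a legitimate path in $D$. Non-negativity of heights is automatic, as each coordinate moves monotonically between the nonnegative values $v_i$ and $v_i'$.
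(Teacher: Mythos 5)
Your proof uses the same mechanism as the paper's: rotate along the closed rotation walk until position $i$ leaves the current $s$-window, switch to the parallel edge with that height shifted by $n$, and rotate back to the original window. The paper's own argument is a brief sketch that only treats reductions ($v_i \to v_i - n$) and never addresses the at-most-$b$-balls constraint; your added care about the existence of a window avoiding position $i$, the increasing direction, and the ordering of moves to keep every intermediate edge within the ball bound supplies details the paper leaves implicit.
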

\begin{proof}
    Since adding/subtracting $n$ to any digit in a juggling sequence of length $n$ does not invalidate the sequence, if $v_i \geq n$ we can simply rotate some juggling sequence $X$ with $s$-prefix $v_0v_1 \ldots v_{s-1}$ until $v_i$ is in the $(n-s$)-suffix, replace with $v_i$ with $v_i - n$, and then rotate back to our original $s$-prefix with $v_i$ replaced by $v_i-n$.  Repeating this eventually will find a path to $v_1'v_2' \ldots v_s'$.
\end{proof}

\begin{thm}\label{JSMain}
    Fix $n,s,b \in \mathbb{Z}^+$ such that $1 \leq s \leq n-1$.  There exists an $s$-ocycle for the set of juggling sequences with period $n$ and at most $b$ balls if and only if $n-s > \hbox{gcd}(n,s)$.
\end{thm}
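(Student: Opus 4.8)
\noindent
The plan is to mirror the proof of Theorem \ref{MainPerms}. I build the transition digraph $D$ whose vertices are the $s$-prefixes of the juggling sequences under consideration and whose edges are the juggling sequences themselves, and then show that $D$ is balanced and weakly connected exactly when $n-s>\gcd(n,s)$, so that Theorem \ref{euler} supplies an Euler tour and hence the desired $s$-ocycle. Throughout I set $d=\gcd(n,s)$ and, as in the proof of Theorem \ref{MainPerms}, partition each length-$n$ sequence into $m=n/d$ consecutive blocks of length $d$. The balanced condition is immediate: given an out-edge at a vertex $v$, that is, a juggling sequence $X$ with $X^{s-}=v$, the rotation $\rho^{s}(X)$ is again a valid juggling sequence by Lemma \ref{rotations}, and since rotation only permutes the digits it leaves the ball count $\frac1n\sum_i t_i$ unchanged; thus $\rho^{s}(X)$ belongs to our set and is an in-edge at $v$. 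The inverse assignment $W\mapsto\rho^{n-s}(W)$ shows this is a bijection between in- and out-edges at every vertex, so $D$ is balanced.

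For the forward direction I take $n-s=kd$ with $k\geq 2$ and establish weak connectivity in two stages. First, Lemma \ref{reductions} lets me pass from any vertex to the congruent vertex all of whose digits lie in $\{0,\dots,n-1\}$, so it suffices to connect the reduced vertices; these correspond bijectively to prefixes of permutation sequences through $\pi_i=t_i+i\pmod n$, so connectivity has been transported to the level of permutation sequences. Second, on that level I run the connectivity argument of Theorem \ref{MainPerms} essentially verbatim: from a given vertex the available out-edges realize every arrangement of the remaining $n-s$ permutation-values in the final $n-s$ positions, Lemma \ref{rotations} guarantees that each rotation is again valid, and Lemma \ref{perms} records that rotating $T$ merely rotates $\Pi_T$ and shifts all of its values by a constant. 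Because $k\geq 2$, the $(n-s)$-suffix spans $k$ consecutive blocks, so rotating to bring any two adjacent positions into the suffix and then re-sorting the suffix realizes an arbitrary adjacent transposition of permutation-values; as in Theorem \ref{MainPerms} these transpositions generate everything, so all permutation sequences are connected.

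The step I expect to be the main obstacle is reconciling these suffix re-sortings with the ball bound, since an arbitrary rearrangement of the suffix can raise the number of balls above $b$, so that not every move used in the permutation argument is legal here. To handle this I plan to exploit the identity (checked by a short computation, using $\sum_i((\pi_i-i)\bmod n)=n\,|\{i:\pi_i<i\}|$ together with $\sum_i\pi_i=\sum_i i$) that the ball count of a reduced sequence equals the number of positions lying below the diagonal. This exhibits the ball count as a controllable statistic and lets me route every reduced vertex to a fixed minimal-ball canonical sequence along a walk whose ball count never exceeds that of its endpoints, keeping the entire path inside $D$. Combining this routing with Lemma \ref{reductions} then yields weak connectivity of $D$, and Theorem \ref{euler} completes this direction.

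For the converse, suppose $n-s=d$. Then the final $n-s=d$ positions constitute a single block, so the only freedom in selecting an out-edge is to permute the digits within one block, while rotations merely cycle the blocks and Lemma \ref{reductions} leaves the permutation sequence unchanged. Consequently the cyclic sequence of block value-sets of $\Pi_T$, read up to a simultaneous cyclic rotation and global translation of the values, is invariant along every edge of $D$. It then remains to exhibit two sequences of at most $b$ balls whose invariants differ: the all-zeros sequence (the identity permutation, with $0$ balls) has block value-sets that are consecutive intervals, whereas interchanging the values $0$ and $d$ yields a sequence of exactly one ball whose block value-sets are not all intervals and is therefore inequivalent. Since $b\geq 1$, both sequences lie in $D$ but in distinct components, each carrying edges, so no Euler tour and hence no $s$-ocycle can exist.
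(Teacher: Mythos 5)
Your overall architecture (transition digraph, balance via rotation, connectivity via permutation sequences and $d$-blocks, converse via a block-level invariant with a witness obtained by swapping the values $0$ and $d$ in the identity permutation sequence) matches the paper's, and your converse witness is literally the paper's sequence $d\,0\cdots 0\,(n-d)\,0\cdots 0$. However, the forward direction has a genuine gap. You correctly note that ``the out-edges realize every arrangement of the remaining $n-s$ permutation-values in the suffix'' fails once the bound of $b$ balls is imposed, and your identity expressing the ball count of a reduced sequence as $|\{i:\pi_i<i\}|$ is right; but your proposed fix --- routing every reduced vertex to a canonical minimal-ball sequence ``along a walk whose ball count never exceeds that of its endpoints'' --- is only announced, never constructed. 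That monotone routing is precisely the hard part: a generic transposition of two permutation-values in the suffix can increase $|\{i:\pi_i<i\}|$, so you must specify exactly which transpositions you perform and verify that each keeps the count from rising. The paper's proof supplies such a mechanism: it sorts $\Pi_T$ to the identity left to right, at each stage rotating (via the block lemma) so that position $i$ and the position currently holding the value $i$ both lie in the $(n-s)$-suffix, then swapping the value $i$ into position $i$, with an intermediate relay through a position $z$ in a block $k$ blocks away when the two relevant blocks are too far apart. Without an explicit move of this kind, your connectivity argument does not go through.

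The converse also needs repair at the boundary case $d=\gcd(n,s)=1$: there every block is a singleton, every singleton is an interval, so your stated reason for inequivalence (``block value-sets are not all intervals'') gives nothing; and for $n=2$ your invariant (the cyclic sequence of block value-sets up to simultaneous rotation and global translation) actually identifies the two witnesses $01$ and $10$, even though they lie in different components. The paper's coarser invariant --- the weight $w(Y_i)=\sum_j t_{id+j}\bmod n$ of each block, which equals $d\not\equiv 0\pmod n$ on the first block of the witness and $0$ on every block of $0^n$ --- works uniformly for all $d$ and avoids both problems.
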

\begin{proof}
    We prove the forward direction by showing that the $s$-ocycle transition digraph $D$ has an Euler tour.  By Theorem \ref{euler} this is done by showing that the graph is balanced and connected.
    \begin{description}
        \item[Balanced:]  Consider a vertex $R=r_0r_1 \ldots r_{s-1}$ in $D$.  We will show that any $(n-s)$-string $Q=q_0q_1 \ldots q_{n-s-1}$ that is a valid $s$-suffix for $R$ is also a valid $s$-prefix for $R$.  Note that strings $RQ$ and $QR$ are simply rotations of each other, so by Lemma \ref{rotations} either both strings are valid juggling sequences or neither string is.  In this manner, there is a bijection between in- and out-edges, hence all vertices are balanced.

            %Suppose for a contradiction that $Q$ is a good prefix, but a bad suffix.  This could happen for two reasons.
%            \begin{enumerate}
%                \item  $Q$ could be a bad suffix because there are $i,j \in \{0,1, \ldots , n-s-1\}$ such that $$q_i+s+i \equiv q_j+s+j \pmod n.$$  Then we have: $$q_i+i \equiv q_j + j \pmod n,$$ and so $Q$ is also a bad prefix.  Note that these statements may all be reversed, hence $Q$ contains a bad pair as a prefix if and only if it contains a bad pair as a suffix.
%                \item  $Q$ could be a bad suffix because there exist a pair $i,j$ with $i \in \{0,1, \ldots , s-1\}$ and $j \in \{0,1, \ldots , n-s-1\}$ such that $$q_j+s+j \equiv r_i+i \pmod n.$$  Then we have $$q_j+j \equiv r_i+i+(n-s) \pmod n,$$ and so $Q$ is also a bad prefix. Note that these statements may all be reversed, hence $RQ$ contains a bad pair if and only if $QR$ contains a bad pair.
%            \end{enumerate}
        \item[Connected:]  Consider an arbitrary vertex $T^{s-}=t_0t_1 \ldots t_{s-1}$ that is an $s$-prefix to some juggling sequence $T=t_0t_1 \ldots t_{n-1}$.  First, by Lemma \ref{reductions} we may assume that $t_i \in \{0,1, \ldots , n-1\}$ for all $i \in \{0, 1, \ldots , n-1\}$.  We will show that this arbitrary vertex is connected to the \textbf{minimum vertex}, which we define to be $V^{s-}=0^s$ (a prefix of $V = 0^n$).  In doing so, we will have shown that every vertex is connected to $V^{s-}$ and hence the graph is connected.

            Compare permutation sequences $\Pi_T$ and $\Pi_V$ corresponding to juggling sequences $T$ and $V$, respectively.  Note that $\Pi_V = 012\ldots (n-1)$, so suppose that for all $x \in \{0, 1, \ldots , i-1\}$ we have $\Pi_T(x) = \Pi_V(x) = x$, but that $\Pi_T(i) = j$ for some $j \in \{i+1, i+2, \ldots , s-1\}$.  We will find a path from the $s$-prefix of $T$ to the $s$-prefix of some juggling sequence $T'$ with permutation sequence $\Pi_{T'}$ that agrees with $\Pi_V$ in the first $i+1$ positions.  Repeating this procedure until $i=n$ will construct a path through $D$ from $T^{s-}$ to $V^{s-}$.

            Assume $n-s=kd$ and $n=md$ for integers $m,k$ where $d= \hbox{gcd}(n,s)$, and let $T=Y_0Y_1 \ldots Y_{m-1}$ be a partition of $T$ into $d$-blocks, i.e. $Y_a=t_{ad}t_{ad+1} \ldots t_{ad+d-1}$.  Suppose that $t_i \in Y_a$ and $t_j \in Y_b$.  We have two cases.
            \begin{enumerate}
                \item  If $b-a < k$:

                In this case, from Lemma \ref{rotations} we may perform $s$-rotations on $T$ so that we arrive at a vertex with both $t_i$ and $t_j$ in the $(n-s)$-suffix of $T$.  Suppose that we performed a total rotation of size $R$, i.e. we are now at the vertex that represents the $s$-prefix of $\rho^R(T)$.

                At this point, the values $t_i$ and $t_j$ are located in positions $i-R$ and $j-R$ of $\rho^R(T)$, respectively.  By Lemma \ref{perms} the corresponding permutation sequence entries are:
                \begin{eqnarray*}
                    \Pi_{\rho^R(T)}(i-R) & = & \rho^R(\Pi_T)(i-R)-R \pmod n \\
                    & = & \Pi_T(i)-R \pmod n
                \end{eqnarray*}
                and
                \begin{eqnarray*}
                    \Pi_{\rho^R(T)}(j-R) & = & \Pi_T(j)-R \pmod n.
                \end{eqnarray*}

                 We now note that the vertex $(\rho^R(T))^{s-}$, defined as the $s$-prefix of $\rho^R(T)$, is also the $s$-prefix of the juggling sequence $\rho^R(T')$ that is obtained from $\rho^R(T)$ by performing the swap $\Pi_{\rho^R(T)}(i-R) \leftrightarrow \Pi_{\rho^R(T)}(j-R)$ and adjusting the values $\rho^R(T)(i-R)$ and $\rho^R(T)(j-R)$ appropriately.  Then rotating backwards by $R$ we reach a juggling sequence $T'$ with $\Pi_{T'}(x)=x$ for all $x \in \{0,1, \ldots , i-1, i\}$.  This means that we have found a path from $T^{s-}$ to $T'^{s-}$, which is one step closer to the minimum vertex.

                \item  If $b-a \geq k$:

                In this case the blocks containing $t_i$ and $t_j$ are more than $k$ apart, so we cannot rotate to have both $t_i$ and $t_j$ in the $(n-s)$-suffix of $T$ simultaneously.  Instead, we pick a point $t_z$ in the block $Y_c$ that is $k$ blocks preceding $Y_b$ and follow case (1) with $t_z$ in place of $t_i$.  This produces a juggling sequence $T'$ with $\Pi_{T'}(z)=i$, where $i \leq z < j$.  Repeating, we will eventually have moved the permutation sequence value $i$ to a block that is close enough to block $Y_a$ to apply Case (1).
            \end{enumerate}
            By repeating the above procedures, we will eventually have transitioned to the vertex corresponding to a juggling sequence with $s$-prefix $01 \ldots (s-1)$.  At this point we have reached the minimum vertex and we are done.
    \end{description}

    For the reverse direction, suppose that $n-s = \hbox{gcd}(n,s)=d$.  Recall that the number of balls $b \in \mathbb{Z}^+$ is given by: $$b = \frac{1}{n} \sum_{i=0}^{n-1}t_i.$$  Thus juggling sequences of period $n$ must always satisfy $\sum_{i=0}^{n-1}t_i \equiv 0 \pmod n$.  Equivalently, when we partition a juggling sequence $T=t_0t_1 \ldots t_{n-1}$ into blocks of length $d$, i.e. $T=Y_0Y_1 \ldots Y_{m-1}$, where $n=md$, we must have: $$\sum_{i=0}^{m-1} w(Y_i) = \sum_{i=0}^{m-1} \sum_{j=0}^d t_{id+j} = \sum_{i=0}^{n-1}t_i \equiv 0 \pmod n,$$ where, for a given block $Y_i$, we call $w(Y_i)=\sum_{j=0}^{d}t_{id+j}$ the \textbf{weight} of block $Y_i$.

    Now since $n-s = \hbox{gcd}(n,s)$, from vertex $T^{s-} = t_0t_1 \ldots t_{s-1}$ in $D$ we may only move to vertices in which the $(n-s)$-suffix has weight equivalent to $w(Y_{m-1})$ mod $n$.  Thus if we can show that, for any $n,s,$ and $b$, there exists a juggling sequence with a block with weight $w \not \equiv 0 \pmod n$, then we are done.  This is witnessed by the juggling sequence $$T = \begin{array}{ccccccccc} d & 0 & 0&  \ldots & 0 & (n-d) & 0 & \ldots & 0,\end{array}$$ with permutation sequence $$\Pi_T = \begin{array}{cccccccccc} d & 1 & 2 & \ldots & (d-1) & 0 & (d+1) & (d+2) & \ldots & (n-1). \end{array}$$  This juggling sequence utilizes one ball (recall we required $b \in \mathbb{Z}^+$) and the weight of the first block of length $d$ is $d \not \equiv 0 \pmod n$. Thus this first block must always have weight equal to $d$ modulo $n$ if $n-s = d$, and so the vertex $T^{s-}$ that represents the $s$-prefix of $T$ is not connected to the vertex $0^s$.  Hence no Euler tour can exist and so no $s$-ocycle exists.
\end{proof}

While Theorem \ref{JSMain} completes the question of when $s$-ocycles for juggling sequences of period $n$ and at most $b$ balls exist, several variations remain open.  For example, one might consider juggling sequences with:
\begin{itemize}
    \item exactly $b$ balls,
    \item at least $b$ balls,
    \item fixed minimum period, or
    \item period $n$ with no restriction on number of balls.
\end{itemize}

\end{document}